\theoremstyle{definition}
\newcommand*{\cnewtheorem}[2]{%
    \newaliascnt{#1}{cnt}%
    \newtheorem{#1}[#1]{#2}%
    \crefname{#1}{#2}{}%
    \aliascntresetthe{#1}%
    \expandafter\newcommand\csname #1autorefname\endcsname{#2}%
    \newtheorem*{#1*}{#2}%
}
\numberwithin{equation}{cnt}
\renewcommand{\phi}{\varphi}
\newcommand{\ep}{\varepsilon}
\newcommand{\bQ}{\mathbb{Q}}
\newcommand{\cL}{\mathcal{L}}
\newcommand{\cP}{\mathcal{P}}
\newcommand{\cU}{\mathcal{U}}
\newcommand{\cV}{\mathcal{V}}
\newcommand{\rT}{\mathrm{T}}
\newcommand{\cstar}{{\mathrm{C}^{\ast}}}
\newcommand{\conti}{{\mathfrak c}}
\DeclareMathOperator{\cf}{cf}
\DeclareMathOperator{\Th}{Th}
\DeclareMathOperator{\clop}{clop}
\newcommand{\set}[3][\relmiddle|]{\left\{#2 #1 #3\right\}}
\newcommand{\norm}[1]{\left\lVert {#1} \right\rVert}
\newcommand{\abs}[1]{\left\lvert {#1} \right\rvert}
\newcommand{\deq}{\mathrel{:=}}
\let\LL\L
\newcommand{\Los}{\LL o\'{s}\xspace}
\newcommand{\LST}{Löwenheim--Skolem--Tarski\xspace}
\newcommand{\KS}{Keisler--Shelah\xspace}
\newcommand{\CH}{continuum hypothesis\xspace}
\newcommand{\trel}{\vartriangleleft}
\newcommand{\treleq}{\trianglelefteq}
\newcommand{\dotminus}{\mathbin{\text{\@dotminus}}}
\title{On the isomorphism problem for ultraproducts of $\mathrm{C}^*$-algebras in continuous model theory}
\author{Akihiko Arai\thanks{Graduate School of Science and Engineering Chiba University}}
\date{\today}
\begin{document}

\maketitle

\begin{abstract}
    In classical model theory, the Keisler--Shelah theorem establishes a fundamental connection between the elementary equivalence of structures and the isomorphism of their ultrapowers.
    Motivated by this, one may ask whether an analogous relationship holds in the framework of continuous model theory,
    which naturally encompasses metric structures such as $\mathrm{C}^\ast$-algebras.
    In this paper, we investigate the isomorphism problem for ultraproducts of operator algebras from a model-theoretic perspective.
    We prove that, assuming the negation of the continuum hypothesis, there exist two elementarily equivalent infinite-dimensional unital $\mathrm{C}^\ast$-algebras $A$ and $B$,
    whose density characters are at most $\mathfrak c$,
    such that for all non-principal ultrafilters $\mathcal U, \mathcal V$ on $\omega$,
    the ultrapowers $A^{\mathcal U}$ and $B^{\mathcal V}$ are not isomorphic.
    This result provides a continuous analogue of certain classical theorems concerning ultraproducts and demonstrates that the model-theoretic behavior of $\mathrm{C}^\ast$-algebras is closely related to set-theoretic principles such as the continuum hypothesis.
\end{abstract}

%\tableofcontents

\nocite{Farah:2019}
\nocite{Bradd:2023}

\nocite{MTFMS}
\nocite{MToCA}

\nocite{FHS1:2013}
\nocite{FHS2:2014}
\nocite{FHS3:2014}

\nocite{EAGLE:2015}
\nocite{FM:2021}
\nocite{GK:2022}

\nocite{Wright:1954}
\nocite{Los:1955}
\nocite{Keisler:1961}
\nocite{Robinson:1966}
\nocite{Sakai:1962}
\nocite{McDuff:1970}
\nocite{Shelah:1971}
\nocite{Shelah:1992}
\nocite{Connes:1976}
\nocite{CK:1990}
%\nocite{Kirchberg:1993}
\nocite{Kirchberg:1995}
\nocite{GH:2001}
\nocite{GS1:2023}
\nocite{GS2:2023}

\nocite{Tsuboi:2022}
\nocite{Goto:2024}

\nocite{CK:1966}
\nocite{Blackadar:2006}
\nocite{JS:1999}

\section{Introduction}

The ultraproduct construction is a significant object of study in both model theory and operator algebras.
The model-theoretic notion of ultraproducts was introduced by {\Los} in \cite{Los:1955} in the 1950's,
and Robinson made use of it to develop nonstandard analysis (cf. \cite{Robinson:1966}).
Around the same time, an ultraproduct-like construction appeared in the context of operator algebras
in the work of Kaplansky and Wright \cite{Wright:1954}.
Subsequently, Sakai introduced the ultraproduct of $\mathrm{II}_1$ factors in \cite{Sakai:1962},
and McDuff \cite{McDuff:1970} discovered its significance.
Later, Connes \cite{Connes:1976} and Kirchberg \cite{Kirchberg:1995} applied this construction to the classification theory of operator algebras.
However, no essential connection between these two perspectives seems to have been recognized at the time.

In model theory, the isomorphism of ultraproducts of two structures is closely related to the notion of \emph{elementary equivalence}.
One of the earliest results in this direction is due to Keisler \cite{Keisler:1961}, who showed that,
under the assumption of the \CH, for any $\cL$-theory $\rT$ in a countable language $\cL$ and any two models $M$ and $N$ of $\rT$ of cardinality at most $\conti$,
$M$ and $N$ are elementarily equivalent (i.e., $M \equiv N$) if and only if they have isomorphic ultrapowers with respect to an ultrafilter on $\omega$.
Furthermore, Shelah \cite{Shelah:1971} (see also \cite{Shelah:1992}) generalized this result by weakening the assumption on the ultrafilter from $\omega$ to sufficiently large cardinals,
thereby showing that the theorem holds in ZFC.
This is known as the \emph{Keisler--Shelah theorem}.
Thus, the following question naturally arises.

\begin{question} \label{ques:intro}
    Assume the \CH fails.
    For which $\cL$-theories $\rT$ in a countable language $\cL$, do there exist models $M$ and $N$ of $\rT$ of size at most $\conti$ (or $\aleph_0$) such that
    $M$ and $N$ are elementarily equivalent,
    but $M^\cU$ and $N^\cV$ are not isomorphic for all non-principal ultrafilters $\cU, \cV$ on $\omega$?
\end{question}

Several concrete results, as well as sufficient conditions for \cref{ques:intro}, have been obtained in \cite{GS1:2023}, \cite{GS2:2023}, \cite{Tsuboi:2022}, and \cite{Goto:2024}.

In recent years, \emph{continuous model theory}, a generalization of classical model theory to the setting of metric structures,
has gained increasing attention as a powerful tool for analyzing ultraproducts of operator algebras.
For example, it was shown in this framework that all countable ultrapowers of a separable $\cstar$-algebra $A$ are isomorphic if and only if the \CH holds (cf. \cite[Theorem 5.1]{FHS1:2013}).

Accordingly, we are interested in the following problem,
which is an extension of \cref{ques:intro} to the framework of continuous model theory,
particularly in the case of the theory of operator algebras.
(It is mentioned in \cite[\S 4]{FHS3:2014} in the case of $\mathrm{II}_1$ factors.)
\begin{question} \label{ques:conti}
    Assume the \CH fails.
    For which $\cL$-theories $\rT$ with $\chi(\rT, \cL) \le \aleph_0$
    do there exist models $M$ and $N$ of $\rT$ of density character at most $\conti$ (or $\aleph_0$) such that $M \equiv N$,
    but $M^\cU \not\cong N^\cV$ for all non-principal ultrafilters $\cU, \cV$ on $\omega$?
\end{question}

In this paper, we study this problem in the setting of $\cstar$-algebras.
For \cref{ques:conti}, we obtain the following result:
\begin{theorem} \label{thm:main-result-intro}
    Assume the \CH fails.
    Then there exist $\cstar$-algebras $A$ and $B$
    of density character at most $\conti$ such that $A \equiv B$,
    but $A^\cU$ and $B^\cV$ are not isomorphic for any non-principal ultrafilters $\cU, \cV$ on $\omega$.
\end{theorem}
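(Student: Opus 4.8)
The plan is to reduce the statement to a classical, Boolean-algebraic phenomenon and transport it into the $\cstar$-setting via Stone duality, using the lattice of projections as the bridging invariant. First I would invoke the classical solutions to the analogous isomorphism problem for discrete structures (cf.\ \cite{GS1:2023,GS2:2023,Tsuboi:2022,Goto:2024}): assuming the \CH fails, there are two elementarily equivalent Boolean algebras $\mathbb{B}_1 \equiv \mathbb{B}_2$ of cardinality $\le \conti$ whose ultrapowers over $\omega$ are never isomorphic, i.e.\ $\mathbb{B}_1^{\cU} \not\cong \mathbb{B}_2^{\cV}$ for all nonprincipal $\cU,\cV$ on $\omega$. (If the known example is phrased for linear orders or other relational structures, I would first recast it inside the class of Boolean algebras, for instance through interval algebras.) Put $X_i \deq \stone(\mathbb{B}_i)$ and $A \deq C(X_1)$, $B \deq C(X_2)$; since the $\mathbb{B}_i$ are infinite and the $X_i$ are compact, $A$ and $B$ are infinite-dimensional unital commutative $\cstar$-algebras of density character $\le \conti$. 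I would then take $\rT \deq \Th(A)$ and show it is as required.

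The heart of the reflection half is the lemma that the projection lattice of a commutative $\cstar$-ultrapower computes the Boolean ultrapower: $\mathrm{Proj}\bigl(C(\stone\mathbb{B})^{\cU}\bigr) \cong \mathbb{B}^{\cU}$ for every Boolean algebra $\mathbb{B}$ and every nonprincipal $\cU$. To prove it, represent a projection in $C(\stone\mathbb{B})^{\cU}$ by self-adjoint contractions $(f_n)$ with $\norm{f_n^2 - f_n} \to 0$ along $\cU$; by continuous functional calculus each such $f_n$ can be approximated, with error tending to $0$ along $\cU$, by a genuine clopen indicator $g_n \in \clop(\stone\mathbb{B}) = \mathbb{B}$, and because distinct projections of a commutative $\cstar$-algebra are at distance $1$, the map sending the projection to the $\cU$-class of $(g_n)$ is a well-defined isomorphism of Boolean algebras onto $\mathbb{B}^{\cU}$. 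Since any $\cstar$-isomorphism carries projections to projections and respects their order and orthogonality, an isomorphism $A^{\cU} \cong B^{\cV}$ would restrict to a Boolean isomorphism $\mathbb{B}_1^{\cU} \cong \mathbb{B}_2^{\cV}$, contradicting the choice of the $\mathbb{B}_i$. This yields $A^{\cU} \not\cong B^{\cV}$ for all $\cU,\cV$.

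For the equivalence half I would use that, for totally disconnected compact $X$, the continuous theory of $C(X)$ is governed by the classical theory of $\clop(X)$: the algebra $C(\stone\mathbb{B})$ is the completion of the $*$-algebra of simple functions whose values are attached to finite partitions of unity in $\mathbb{B}$, and this presentation makes it uniformly interpretable in $\mathbb{B}$ in continuous logic (cf.\ the model theory of abelian $\cstar$-algebras in \cite{MToCA}). As interpretations preserve elementary equivalence, $\mathbb{B}_1 \equiv \mathbb{B}_2$ yields $A \equiv B$, so $\rT = \Th(A) = \Th(B)$ is a complete extension of the theory of $\cstar$-algebras with models $A$ and $B$ as claimed. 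The hard part will be making the passage $\mathbb{B} \rightsquigarrow C(\stone\mathbb{B})$ genuinely functorial at the level of continuous logic from both sides at once: one must verify the interpretation carefully (so that $\equiv$ transfers) while noting that the $\cstar$-ultrapower $C(\stone\mathbb{B})^{\cU}$ need not be of real rank zero and hence need not equal $C(\stone(\mathbb{B}^{\cU}))$, so that only the projection-lattice invariant — and not the whole algebra — can be transported through the ultrapower. Reconciling these two directions, together with securing, under the failure of the \CH, the classical example in Boolean form, is where I expect the main difficulty to concentrate.
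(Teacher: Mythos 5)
Your reduction machinery is essentially sound: the identification $\cP\bigl(C(\stone \mathbb{B})^\cU\bigr) \cong \mathbb{B}^\cU$ is correct --- it is \cref{lem:ultrapower-of-abel-cstar} of the paper, or follows from \cref{prop:definable-set-ultraproduct} together with the fact that distinct projections in a commutative $\cstar$-algebra are at distance $1$ --- and a $*$-isomorphism $A^\cU \cong B^\cV$ does restrict to a Boolean isomorphism of the projection lattices. Moreover, the transfer of elementary equivalence, which you flag as the main difficulty, admits a clean proof that avoids interpretations entirely: by the classical Keisler--Shelah theorem choose a set $I$ and ultrafilters with $\mathbb{B}_1^\cU \cong \mathbb{B}_2^\cV$; Stone duality and \cref{lem:ultrapower-of-abel-cstar} then give $C(X_1)^\cU \cong C\bigl(\sum_\cU X_1\bigr) \cong C\bigl(\sum_\cV X_2\bigr) \cong C(X_2)^\cV$, and since diagonal embeddings into ultrapowers are elementary, $A \equiv B$ follows.

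The genuine gap is at the foundation: you assume, rather than prove, the classical input --- elementarily equivalent Boolean algebras $\mathbb{B}_1 \equiv \mathbb{B}_2$ of size $\le \conti$ with $\mathbb{B}_1^\cU \not\cong \mathbb{B}_2^\cV$ for \emph{all} nonprincipal $\cU, \cV$ on $\omega$ --- and the cited classical examples are not (at least not obviously) of this form. Your fallback of recasting a linear-order example through interval algebras fails at a specific step: the known order examples are distinguished by cofinality, but $B(L)^\cU$ is not $B(L^\cU)$ (the ultrapower contains pseudo-finite unions of intervals), and an abstract Boolean isomorphism $B(L_1)^\cU \cong B(L_2)^\cV$ need not respect the generating chains, so neither the non-isomorphism of ultrapowers nor, without further work, elementary equivalence transfers from the orders to the algebras. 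This missing classical fact is precisely what the paper proves, in continuous form: in \cref{thm:main-result-general} it builds, inside a sufficiently saturated model, elementary chains of lengths $\omega_1$ and $\omega_2$ in which each step adds a projection splitting all projections of the previous stage (\cref{lem:example-of-main-result}), and then shows that the cofinality of a definable $\{0,1\}$-valued asymmetric relation on pairs of projections in $M_\kappa^\cU$ equals $\kappa$ for uncountable regular $\kappa$, which distinguishes the ultrapowers. So your plan would be completed exactly by carrying out that chain construction (say, on atomless Boolean algebras); in other words, the step your proposal outsources to citation is the core argument of the paper.
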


We describe the overall outline of this paper.
In Section 2, we present preliminaries, including several basic results in continuous model theory.

In Section 3, we provide a sufficient condition for \cref{ques:intro} via \cref{thm:main-result-general}.
This result applies to general continuous languages,
and in the classical setting it yields several known results; see \cref{rmk:classical-results}.

In Section 4, we prove \cref{thm:main-result-intro} based on the results of Section 3. In particular, \cref{lem:example-of-main-result} provides a concrete example of a theory satisfying \cref{thm:main-result-intro}.

In Section 5, we discuss a topological application of the results obtained in Section 4.
Finally, in Section 6, we propose several related open problems as directions for future research.

\subsection*{Acknowledgement}

I am grateful to the anonymous referee whose numerous comments and suggestions helped to improve the exposition and sharpen the results of the paper substantially.
I would like to express my deepest gratitude to my supervisor, Hiroki Matui, for his continuous guidance and encouragement.
I am also grateful to Kota Takeuchi and all members of his laboratory for their helpful comments and discussions through seminars.
Finally, I would like to thank Tatsuya Goto, who introduced this problem at the Model Theory Summer School and Workshop 2024.

\section{Preliminaries}

In this section, we present several basic results in continuous model theory, without proofs.
For detailed proofs and further discussion, we refer the reader to \cite{Bradd:2023}, \cite{FHS2:2014}, \cite{MToCA}, and \cite{MTFMS}.

\begin{notation}
    \begin{itemize}
        \item We denote ordinals by $\alpha, \beta, \gamma, \dots$ and cardinals by $\kappa, \lambda, \dots$.
        \item For a cardinal $\kappa$, we denote by $\kappa^+$ the successor cardinal of $\kappa$, that is, the least cardinal greater than $\kappa$.
        \item We denote by $\omega_\alpha$ the initial ordinal of cardinality $\aleph_\alpha$.
              In particular, $\omega_0$, which is the set of all natural numbers, is simply denoted by $\omega$.
        \item We denote the cardinality of the continuum $2^{\aleph_0}$ by $\conti$.
              The \emph{continuum hypothesis} (CH) is the statement $\conti = \aleph_1$.
              Thus, $\lnot$CH is equivalent to $\aleph_1 < \conti$, and hence to $\aleph_2 \le \conti$.
        \item For a set $X$ equipped with a binary relation $R$,
              the \emph{cofinality} of $(X, R)$,
              denoted by $\cf(X, R)$,
              is the least cardinality of a cofinal subset of $(X, R)$.
        \item We denote by $\beta I \setminus I$ the set of all non-principal ultrafilters on a set $I$.
    \end{itemize}
\end{notation}

We use the notation of continuous model theory introduced in \cite{Bradd:2023}.
From now on, unless otherwise specified,
we assume that $\cL$ is a language
(in the sense of continuous model theory),
and that $\rT$ is a complete theory with $\chi(\rT, \cL) \le \aleph_0$.

Many fundamental results in classical model theory admit natural analogues in continuous model theory.
For the reader's convenience, we list below several relevant theorems together with references:
\begin{itemize}
    \item \Los' theorem (see \cite[Theorem 5.4]{MTFMS} for a complete proof),
    \item The downward \LST theorem (see \cite[Theorem 5.6]{Bradd:2023} and \cite[Theorem 4.6]{FHS2:2014}),
    \item The Tarski's elementary chain theorem (cf. \cite[Proposition 5.10]{Bradd:2023}).
    \item A characterization of definable sets (see \cite[Theorem 8.2]{Bradd:2023}).
    \item The existence of saturated models (see \cite[Proposition 7.8]{Bradd:2023}).
\end{itemize}

Finally, we conclude this section with some remarks.

\begin{remark} \label{rmk:elementary-chain}
    Let $(M_\alpha)_{\alpha < \gamma}$ be an elementary chain of $\cL$-structures,
    and let $M = \bigcup_{\alpha < \gamma} M_\alpha$.
    The continuous analogue of Tarski's elementary chain theorem states that the completion of $M$ is an elementary extension of $M_\alpha$ for all $\alpha < \gamma$.
    However, if $\cf(\gamma) > \aleph_0$, then $M$ is automatically complete.
    Therefore, $M$ itself is an elementary extension of $M_\alpha$ for all $\alpha < \gamma$.
\end{remark}

The following result is not stated explicitly in \cite{Bradd:2023},
so we include its statement and proof here.

\begin{lemma} \label{lem:chara-of-type}
    Let $M$ be an $\cL$-structure and let $A \subseteq M$.
    For a set $t(\bar x)$ of $\cL_A$-formulas with free variables $\bar x$, the following are equivalent:
    \begin{enumerate}
        \item $t(\bar x)$ is (the kernel of) a type over $A$;
              that is, there exist a model $N \succeq M$ and a tuple $\bar b \in N$ such that
              $\phi^N(\bar b) = 0$ for all $\phi(\bar x) \in t(\bar x)$.
        \item $t(\bar x)$ is approximately finitely satisfiable in $M$.
              i.e., for every finite subset $t_0(\bar x) \subseteq t(\bar x)$ and every $\ep > 0$, there exists $\bar b \in M$ such that $\abs{\phi^M(\bar b)} < \ep$ for all $\phi(\bar x) \in t_0(\bar x)$.
    \end{enumerate}
\end{lemma}

\begin{proof}
    The implication $(2) \implies (1)$ follows immediately from the compactness theorem for continuous model theory (see \cite[Theorem 6.4]{Bradd:2023}).

    We show (1) $\implies$ (2).
    By (1), there exists a model $N \succeq M$ such that $t(\bar x)$ is realized in $N$.
    Fix a finite subset $t_0(\bar x) \subseteq t(\bar x)$.
    The $\cL_A$-sentence
    \[
        \inf_{\bar x} \max \set[:]{\abs{\phi(\bar x)}}{\phi(\bar x) \in t_0(\bar x)}
    \]
    belongs to $\Th(N) = \Th(M_A)$.
    Thus, for every $\ep > 0$,
    there exists some $\bar b \in M$ such that $\abs{\phi^M(\bar b)} < \ep$ holds for all $\phi(\bar x)$ in $t_0(\bar x)$,
    which completes the proof of (2).
\end{proof}

\section{Main Result for General Setting}

In this section, we give a sufficient condition on $\rT$ for \cref{thm:main-result-intro} in the general setting of continuous model theory.

\begin{theorem} \label{thm:main-result-general}
    Assume that $\rT$ has a definable set $X$ relative to $\rT$ and a (possibly definable) formula $\phi(\bar x, \bar y)$ satisfying the following conditions:
    \begin{itemize}
        \item $\phi$ takes discrete values $\{0,1\}$ on $X$.
              That is, for every $M \models \rT$, we have $\phi^M(\bar a, \bar b) \in \{0,1\}$ for all $\bar a, \bar b \in X(M)$.
        \item $\phi$ defines an asymmetric relation on $X$.
              That is, for every $M \models \rT$ and all $\bar a, \bar b \in X(M)$,
              if $\phi^M(\bar a, \bar b) = 0$, then $\phi^M(\bar b, \bar a) = 1$.
        \item For any $M \models \rT$, any $n < \omega$, and any $\bar a_i \in X(M)$ $(i < n)$,
              there exists $\bar b \in X(M)$ such that $\phi^M(\bar a_i, \bar b) = 0$ for all $i < n$.
    \end{itemize}

    Then, under the failure of the \CH,
    there exist models $A, B \models \rT$
    with density characters at most $\conti$ such that
    $A^\cU \not\cong B^\cV$ for all $\cU, \cV \in \beta \omega \setminus \omega$.
\end{theorem}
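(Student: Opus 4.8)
The plan is to isolate an isomorphism invariant of the definable directed set carried by $X$ and then realize two different values of it on the relevant ultrapowers. Write $a \sqsubset b$ for $\phi^M(a,b)=0$ on $X(M)$. By the first two hypotheses $\sqsubset$ is asymmetric and $\{0,1\}$-valued, and by the third it is upward directed (every finite subset has a strict upper bound), so in particular $X(M)$ has no $\sqsubset$-maximal element. All three properties are expressible as conditions of value $0$, so by \Los' theorem (\cref{thm:Los}, in the form for definable predicates) together with \cref{prop:definable-set-ultraproduct} they pass to any ultraproduct; thus $(X(M^\cU),\sqsubset)$ is again asymmetric and directed. Since $X$ and $\phi$ are definable, every isomorphism $A^\cU \to B^\cV$ carries $X(A^\cU)$ onto $X(B^\cV)$ and preserves $\sqsubset$, hence is an isomorphism of directed sets and preserves their cofinality $\cf(X(\cdot),\sqsubset)$, the least cardinality of a set $C$ with $\forall x\,\exists c\in C\colon x\sqsubset c$. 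The whole proof therefore reduces to producing $A,B\models\rT$ of density character $\le\conti$ with $\cf(X(A^\cU),\sqsubset)\neq\cf(X(B^\cV),\sqsubset)$ for all non-principal $\cU,\cV$.

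For a regular uncountable $\lambda$, call a $\sqsubset$-increasing sequence $(c_\alpha)_{\alpha<\lambda}$ in $X(M)$ a \emph{nested cofinal tower} if its down-sets $\mathord\downarrow c_\alpha := \{x\in X(M) : x\sqsubset c_\alpha\}$ satisfy $\mathord\downarrow c_\alpha\subseteq\mathord\downarrow c_\beta$ for $\alpha<\beta$ and $\bigcup_\alpha \mathord\downarrow c_\alpha = X(M)$. The key point is that nestedness substitutes for the transitivity that $\phi$ need not possess. If $M$ carries such a tower of length $\lambda$, then $\cf(X(M^\cU),\sqsubset)=\lambda$ for every $\cU\in\beta\omega\setminus\omega$, uniformly. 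For the upper bound, given $[(a_i)_i]\in X(M^\cU)$ choose $\alpha(i)$ with $a_i\sqsubset c_{\alpha(i)}$; since $\cf(\lambda)>\omega$ we have $\beta:=\sup_i\alpha(i)<\lambda$, and nestedness gives $a_i\sqsubset c_\beta$ for every $i$, so $[(a_i)_i]\sqsubset\iota(c_\beta)$; hence $\{\iota(c_\alpha)\}_{\alpha<\lambda}$ is cofinal. For the lower bound, if $\mathcal D$ were cofinal with $\abs{\mathcal D}<\lambda$, writing $d=[(d_i)_i]$ and picking $\gamma(d,i)$ with $d_i\sqsubset c_{\gamma(d,i)}$, regularity gives $\gamma^\ast:=\sup\{\gamma(d,i)\}<\lambda$; nestedness yields $d\sqsubset\iota(c_{\gamma^\ast})$ for all $d\in\mathcal D$, and asymmetry then forces $\iota(c_{\gamma^\ast})\not\sqsubset d$ for every $d$, so $\iota(c_{\gamma^\ast})$ is undominated and $\mathcal D$ is not cofinal.

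It remains to construct, for each regular uncountable $\lambda\le\conti$, a model $M_\lambda\models\rT$ with $\chi(M_\lambda)\le\conti$ carrying a nested cofinal tower of length $\lambda$. Taking $\lambda=\aleph_1$ and, under the failure of the \CH (so that $\aleph_2\le\conti$), $\lambda=\aleph_2$ then yields $A:=M_{\aleph_1}$ and $B:=M_{\aleph_2}$ with $\cf(X(A^\cU))=\aleph_1\neq\aleph_2=\cf(X(B^\cV))$, completing the argument. To build $M_\lambda$, fix a $\conti^+$-saturated $\mathfrak M\models\rT$ (\cref{thm:existence-of-saturated-extension}) and construct the tower alongside an increasing chain $(A_\beta)_{\beta<\lambda}$ of submodels by recursion. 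At stage $\beta$, the set of conditions expressing ``$c_\alpha\sqsubset y$ and $\mathord\downarrow c_\alpha\subseteq\mathord\downarrow y$ for all $\alpha<\beta$, and $e\sqsubset y$ for all $e$ in a dense subset of $X(A_\beta)$'' is finitely satisfiable — any finite fragment asks only for a strict upper bound of a finite union of down-sets together with finitely many points, which exists by the third hypothesis — and is a type over a parameter set of size $\le\conti$, hence realized by some $c_\beta\in\mathfrak M$ by $\conti^+$-saturation. The absorption clause $\mathord\downarrow c_\alpha\subseteq\mathord\downarrow c_\beta$, being evaluated in $\mathfrak M$, is inherited by every submodel, so the tower is nested in $M_\lambda:=\overline{\bigcup_{\beta<\lambda}A_\beta}$; requiring $c_\beta$ to dominate $X(A_\beta)$ (a closed down-set, so dominating a dense subset suffices) makes it cofinal there, as each element enters at some stage and is dominated thereafter. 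A downward Löwenheim–Skolem–Tarski step (\cref{thm:downward-LST}) keeps $\chi(A_\beta)\le\abs{\beta}+\chi(\cL,\rT)\le\conti$, whence $\chi(M_\lambda)\le\conti$, and \cref{rem:chain} ensures the union is already complete since $\cf(\lambda)>\omega$.

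The main obstacle is precisely this construction: the three hypotheses supply only finite directedness and no transitivity, so a cofinal $\sqsubset$-chain will not have nested down-sets for free. The device that resolves this is to fold the absorption condition $\mathord\downarrow c_\alpha\subseteq\mathord\downarrow c_\beta$ directly into the type realized at each step — a legitimate definable-predicate condition exactly because $\phi$ is $\{0,1\}$-valued on $X$ — and to verify its finite satisfiability from directedness alone, which then forces nestedness in the ambient saturated model and hence in the small model extracted by downward Löwenheim–Skolem–Tarski. Controlling this simultaneously with the density bound $\le\conti$, so that both target cofinalities $\aleph_1$ and $\aleph_2$ are available under the failure of the \CH, is the remaining bookkeeping.
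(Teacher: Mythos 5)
Your overall architecture coincides with the paper's: the invariant is the cofinality of $(X(\cdot),\sqsubset)$ in ultrapowers (your observation that a definable $X$ and $\phi$ force any isomorphism $A^\cU \to B^\cV$ to preserve it is correct, and left implicit in the paper), the models are unions of increasing chains of small elementary submodels of a highly saturated model, and the endgame is $\aleph_1$ versus $\aleph_2$ under $\lnot$CH via \Los' theorem, \cref{thm:downward-LST}, \cref{thm:existence-of-saturated-extension} and \cref{rem:chain}. The genuine gap is the device you introduce to replace transitivity: the \emph{nested cofinal tower} with absorption clause $\downarrow c_\alpha \subseteq \downarrow c_\beta$, down-sets computed in the ambient model. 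Your justification that each finite fragment of the stage-$\beta$ type ``asks only for a strict upper bound of a finite union of down-sets together with finitely many points, which exists by the third hypothesis'' is wrong: the third hypothesis produces a common strict upper bound for finitely many \emph{points}, whereas $\downarrow c_\alpha$ is in general infinite and no application of finite directedness controls it. In fact the absorption clause can fail to be even approximately satisfiable within the theorem's scope. Take $\rT$ to be the (classical, hence metric with the $\{0,1\}$-metric) complete theory of the generic tournament, $X$ the whole sort, and $\phi$ the characteristic function of the edge relation: all three hypotheses hold, but the extension axioms guarantee that for any distinct $c, y$ there is $x$ with $x \sqsubset c$ and $y \sqsubset x$, so $\downarrow c \not\subseteq \downarrow y$ for every $y \neq c$, while $y = c$ is excluded by irreflexivity. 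Since the relevant predicates are $\{0,1\}$-valued on $X$, the condition ``$c_0 \sqsubset y$ and $\downarrow c_0 \subseteq \downarrow y$'' has value $1$ at every $y$, hence is not a type (\cref{lem:chara-of-type}), and your recursion already dies at stage $1$; no amount of saturation helps. So your claim that every $\rT$ satisfying the hypotheses admits nested cofinal towers of all regular lengths up to $\conti$ is false.

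The repair is the paper's construction, which your proposal in fact already contains: drop the absorption clause and keep only the filtration-domination clause ``$e \sqsubset y$ for all $e$ in a dense subset of $X(A_\beta)$'' (the paper realizes the type $t(\bar x) = \{d(\bar x, X)\} \cup \{\phi(\bar a, \bar x) : \bar a \in X(M_\beta)\}$ over the small submodel $M_\beta$), which \emph{is} finitely satisfiable by the third hypothesis because finite fragments mention only finitely many points. One then never nests down-sets; one nests the filtration: $X(A_\alpha) \subseteq X(A_\beta) \subseteq \downarrow c_\beta$ for $\alpha \le \beta$. This is exactly what both of your cofinality estimates consume. In the upper bound, replace ``choose $\alpha(i)$ with $a_i \sqsubset c_{\alpha(i)}$'' by ``choose $\alpha(i)$ with $a_i \in X(A_{\alpha(i)})$'' — legitimate since $M_\lambda = \bigcup_{\beta<\lambda} A_\beta$ by \cref{rem:chain} as $\cf(\lambda) > \omega$ — and then $a_i \in X(A_\beta)$ yields $a_i \sqsubset c_\beta$ directly; the lower bound is repaired symmetrically. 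With that substitution, your computation $\cf(X(M_\lambda^\cU), \sqsubset) = \lambda$ for all $\cU \in \beta\omega\setminus\omega$, your appeal to \cref{prop:definable-set-ultraproduct} to pick representatives inside $X(M_\lambda)$, your use of asymmetry in the lower bound, and the conclusion with $A = M_{\aleph_1}$, $B = M_{\aleph_2}$ reproduce the paper's proof verbatim.
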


\begin{proof}
    We take a $\kappa$-saturated model $M$ of $\rT$ for sufficiently large cardinals $\kappa$.

    For $\bar a, \bar b \in X(M)$, define $\bar a \trel \bar b$ by
    $\phi^M(\bar a, \bar b) = 0$.
    Let $\bar a \treleq \bar b$ denote
    $\bar a \trel \bar b$ or $\bar a = \bar b$.

    For ordinals $\alpha \le \conti$, we define $\bar a_\alpha \in M$ and $M_\alpha \preceq M$ inductively that satisfy the following conditions:
    \begin{itemize}
        \item For all $\alpha < \conti$, we have $M_\alpha \preceq M_{\alpha + 1}$ and $\chi(M_\alpha) \le \abs{\alpha} + \aleph_0$.
        \item For all $\alpha < \conti$, we have
              $\bar a_\alpha \in X(M_{\alpha+1})$ and $\bar a \trel \bar a_\alpha$ for all $\bar a \in X(M_\alpha)$.
        \item For every limit ordinal $\gamma \le \conti$, we have
              $M_\gamma = \overline{\bigcup_{\alpha < \gamma} M_\alpha}$.
    \end{itemize}

    One can find a separable elementaly submodel $M_0$ of $M$ by the downward \LST theorem.
    Suppose that $M_\alpha$ has already been defined.
    Consider the set of formulas
    \[
        t(\bar x) \deq \{d(\bar x, X)\} \cup \set[:]{\phi(\bar a, \bar x)}{\bar a \in X(M_\alpha)}.
    \]
    By the assumptions on $\phi$, $t(\bar x)$ is approximately finitely satisfiable.
    Thus, by \cref{lem:chara-of-type}, $t(\bar x)$ is a type over $M$.
    By the saturation of $M$, we can choose $\bar a_\alpha \in M$ realizing $t(\bar x)$.
    By the downward \LST theorem, we can take $M_{\alpha+1} \preceq M$ such that $M_\alpha \cup \{\bar a_\alpha\} \subseteq M_{\alpha + 1}$ and $\chi(M_{\alpha + 1}) \le \chi(M_\alpha) + \aleph_0$.
    For a limit ordinal $\gamma \le \conti$, we note that $M_\gamma \preceq M$ by the Tarski's elementary chain theorem.

    In this setting, the following holds:
    \begin{claim*}
        For every uncountable regular cardinal $\kappa \le \conti$ and every $\cU \in \beta\omega \setminus \omega$, we have
        \[
            \cf\left(X(M_\kappa^\cU), \treleq\right) = \kappa.
        \]
    \end{claim*}

    \begin{proof}[(Proof of Claim)]
        First, we show that $\cf(X(M_\kappa^\cU), \treleq) \le \kappa$.
        Let
        \[
            C \deq \set[:]{[(\bar a_\alpha)_n]}{\alpha < \kappa} \subseteq X(M_\kappa^\cU).
        \]
        Obviously, $\abs{C} \le \kappa$.
        We show that $C$ is cofinal in $(X(M_\kappa^\cU), \treleq)$.
        Let $\bar d \in X(M_\kappa^\cU)$.
        Since definable sets commute with ultraproducts,
        we may assume that $\bar d = [(\bar d_n)_n]_\cU$,
        where $\bar d_n \in X(M_\kappa)$ for all $n < \omega$.
        By the regularity of $\kappa$ and \cref{rmk:elementary-chain},
        for each $n < \omega$ we may choose $\alpha_n < \kappa$ such that $\bar d_n \in X(M_{\alpha_n})$.
        Let $\alpha \deq \sup_{n < \omega} \alpha_n$
        and define $\bar c \deq [(\bar a_\alpha)_{n < \omega}]_\cU \in C$.
        Since $\kappa$ is a regular cardinal,
        we have $\alpha < \kappa$,
        and hence $\bar a_\alpha \in X(M_\kappa)$.
        For all $n < \omega$, we have $\bar d_n \treleq \bar a_\alpha$
        Therefore, by the \Los' theorem, we obtain $\bar d \treleq \bar c$.

        Next, we show that $\kappa \le \cf(X(M_\kappa^\cU), \treleq)$.
        Let $C \subseteq X(M_\kappa^\cU)$ be a subset with $\abs{C} < \kappa$.
        Write $C = \set[:]{\bar c^\alpha}{\alpha < \lambda}$, where $\lambda \deq \abs{C} < \kappa$, and $\bar c^\alpha = [(\bar c^\alpha_n)_n]_\cU$ with $\bar c^\alpha_n \in X(M_\kappa)$ for each $n < \omega$ and $\alpha < \lambda$.
        By the regularity of $\kappa$ and \cref{rmk:elementary-chain},
        for each $n < \omega$ and $\alpha < \lambda$,
        we can take $\beta^\alpha_n < \kappa$ such that $\bar c^\alpha_n \in M_{\beta^\alpha_n}$.
        Let $\beta \deq \sup \set[:]{\beta^\alpha_n}{n < \omega, \alpha < \lambda}$ and $\bar d \deq [(\bar a_\beta)_n]_\cU$.
        Since $\kappa$ is regular and $\lambda < \kappa$, we have $\beta < \kappa$, and hence $\bar a_\beta \in X(M_\kappa)$.
        For each $\alpha < \lambda$ and every $n < \omega$, we have
        $\bar c^\alpha_n \trel \bar a_\beta$, and hence
        $\bar c^\alpha \trel \bar d$ by the \Los' theorem.
        Since $\trel$ is an asymmetric relation, it follows that
        $\bar d \not\treleq \bar c^\alpha$ for all $\alpha < \lambda$.
    \end{proof}

    Assume the \CH fails.
    Then we have $\omega_2 \le \conti$.
    Thus, by the claim above, letting $A \deq M_{\omega_1}$ and
    $B \deq M_{\omega_2}$, we obtain
    \[
        \cf(X(A^\cU), \treleq) = \omega_1 \neq \omega_2 = \cf(X(B^\cV), \treleq)
    \]
    for any $\cU, \cV \in \beta \omega \setminus \omega$, thereby completing the proof.
\end{proof}

The above result can be regarded as a generalization of \cite[Theorem 2.1]{GS1:2023} and the results obtained in \cite[\S 3]{Tsuboi:2022}.

\begin{remark}
    The conditions of \cref{thm:main-result-general} can be described in terms of formulas as follows:
    \begin{itemize}
        \item $\rT \models \sup_{\bar x, \bar y \in X}
                  \min\{\abs{\phi(\bar x, \bar y)}, \abs{1 - \phi(\bar x, \bar y)}\}$.
        \item $\rT \models \sup_{\bar x, \bar y \in X}
                  \max\{0, 1 - \phi(\bar y, \bar x) - \phi(\bar x, \bar y)\}$.
        \item For all $n < \omega$, we have
              \[
                  \rT \models
                  \sup_{\bar x_0 \in X} \dots \sup_{\bar x_{n-1} \in X}
                  \inf_{\bar y \in X}
                  \max_{i < n} \phi(\bar x_i, \bar y).
              \]
    \end{itemize}
    Consequently, since $\rT$ is assumed to be complete,
    if these conditions hold in some model of $\rT$,
    then they hold in every model of $\rT$.
\end{remark}

\begin{remark} \label{rmk:classical-results}
    In the classical setting, condition (1) of \cref{thm:main-result-general} is not needed,
    so conditions (2) and (3) suffice.
    In this case, several known results follow from \cref{thm:main-result-general}:
    \begin{itemize}
        \item $\Th(\bQ,<)$ (\cite[Theorem 2.1]{GS1:2023}), with $\phi(x,y) \equiv x < y$.
              \begin{itemize}
                  \item More generally, any (strictly) directed set satisfies the conditions of \cref{thm:main-result-general}.
              \end{itemize}
        \item The theory of random graphs (\cite{Tsuboi:2022}) : $\phi((x, y), (u, v)) \equiv \lnot (x R v) \land y R u$.
        \item The theory of atomless Boolean algebras (\cite{Goto:2024}) : $\phi((a, b), (a', b')) \equiv (a b' \neq 0 \land a (1 - b') \neq 0) \land (a' b = 0 \lor a' (1 - b) = 0)$.
    \end{itemize}
\end{remark}

\begin{remark}
    It is known that the \KS theorem also holds in the continuous setting (cf. \cite{GK:2022}).
    Therefore, the result of \cref{thm:main-result-general} indicates that the failure of the \CH affects the cardinalities of the index sets of ultrapowers.
\end{remark}

\section{Proof of \cref{thm:main-result-intro}}

In order to prove \cref{thm:main-result-intro}, it suffices to find a theory of unital $\cstar$-algebras that satisfies the conditions of \cref{thm:main-result-general}.
For a $\cstar$-algebra $A$, $\cP(A)$ denotes the set of all projections in $A$.
For $p \in \cP(A) \setminus \{0\}$, $p$ is said to be \emph{minimal} if, for any $q \in \cP(A)$ with $0 \le q \le p$ one has $q = 0$ or $q = p$.

\begin{lemma} \label{lem:example-of-main-result}
    Let $\rT$ be a theory of unital $\cstar$-algebras $A$ satisfying the following conditions:
    \begin{itemize}
        \item For all $p, q \in \cP(A)$, we have $pq = qp$.
        \item $A$ has no minimal projections.
              That is, for every $p \in \cP(A) \setminus \{0\}$, there exists $q \in \cP(A)$ such that $0 \lneq q \lneq p$.
    \end{itemize}
    Then, there exists a formula that satisfies the conditions in \cref{thm:main-result-general}.
\end{lemma}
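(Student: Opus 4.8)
The plan is to produce the definable set $X$ and the formula $\phi$ demanded by \cref{thm:main-result-general}, and then let that theorem do the rest. The natural choice for $X$ is the set of nonzero projections: by \cref{ex:definable-sets-in-cstar} the projections already form a definable set relative to $\rT$, and since any nonzero projection has norm $1$, the point $0$ is isolated among projections, so the nonzero projections again constitute a definable set. For $\phi$, I would exploit the first hypothesis: because all projections commute in every model of $\rT$, the subprojection relation $p\le q$ is expressed by $\norm{q - pq}=0$, and comparable distinct projections are uniformly separated (one has $\norm{p-q}=1$ whenever $p\lneq q$). Hence I would take $\phi(x,y)$ to be a fixed continuous combination of $\norm{y-xy}$ and $\norm{x-y}$ arranged so that, \emph{on} $X$, it equals $0$ exactly when $y\lneq x$ (that is, $y$ is a proper subprojection of $x$) and equals $1$ otherwise. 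The associated relation $\bar a\vartriangleleft\bar b$ is then reverse proper inclusion of projections.

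With this setup the first two conditions of \cref{thm:main-result-general} are immediate: $\phi$ is $\{0,1\}$-valued on $X$ by construction (using the uniform separation of distinct projections to make the indicator genuinely two-valued), and asymmetry is exactly the antisymmetry of $\le$ together with the irreflexivity of proper inclusion. The second hypothesis enters precisely in the third, directedness, condition. Its instance $n=1$ asserts that every nonzero projection $p$ admits some $q$ with $p\vartriangleleft q$, i.e. a proper nonzero subprojection of $p$; this is literally the no-minimal-projections assumption, so the relation $\vartriangleleft$ has no maximal element. For $n\ge 2$ and commuting projections $p_0,\dots,p_{n-1}$ one forms the meet $\bigwedge_i p_i$, and when it is nonzero the no-minimal hypothesis yields a proper nonzero subprojection of it, which is a common $\vartriangleleft$-upper bound as required.

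The step I expect to be the main obstacle is exactly this full directedness: when the meet $\bigwedge_i p_i$ vanishes — for instance for pairwise orthogonal $p_i$ — there is no common nonzero subprojection, and the naive inclusion order fails the third condition. Overcoming this is the real content of the lemma, and it is where atomlessness must be used more cleverly than through a single splitting: one wants $\phi$ to encode not the bare inclusion order but an order in which finite families are \emph{always} boundable. A promising route is to have $\phi$ (possibly on tuples) compare positions along a definable chain built by iterated halving of the unit, so that the induced order is linear — and hence automatically directed — while no-minimal-projections guarantees that the chain has no last element; the delicate point is ensuring such a comparison is genuinely definable and $\{0,1\}$-valued uniformly across all models of $\rT$. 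Once the three conditions are secured, \cref{thm:main-result-general} applies verbatim, producing, under $\lnot$CH, models $A,B\models\rT$ of density character at most $\conti$ whose ultrapowers $A^\cU$ and $B^\cV$ are nonisomorphic for all nonprincipal $\cU,\cV\in\beta\omega\setminus\omega$.
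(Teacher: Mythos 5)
You have correctly identified where the difficulty lies, but you have not resolved it, and your proposed fallback does not work. With $X$ the nonzero projections and $\vartriangleleft$ given by reverse proper inclusion, directedness fails exactly as you say (take orthogonal $p_0, p_1$: no nonzero common subprojection exists), so the instance $n \ge 2$ of the third condition of \cref{thm:main-result-general} is simply false for your relation, and the ``$n=1$ is literally no-minimal-projections'' observation does not salvage it. Your suggested repair --- comparing positions along a chain built by iterated halving of the unit --- is not definable: such a chain involves arbitrary choices, is not picked out uniformly across all models of $\rT$ by any formula or definable predicate, and ``position along the chain'' is not expressible in the sense required (the uniform assignment $X$ and the formula $\phi$ must be fixed once and interpreted in every model, including ultrapowers, where any externally chosen chain loses meaning). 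So the proposal has a genuine gap precisely at what you yourself call ``the real content of the lemma.''

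The paper's resolution is different in kind: it abandons the inclusion order entirely and works with a \emph{splitting} relation on \emph{pairs} of projections. Taking $X = \cP(\cdot)\setminus\{0,1\}$ and $\phi(x,y) \deq \max\{1-\norm{xy},\, 1-\norm{x(1-y)}\}$, commutativity makes $pq$ and $p(1-q)$ projections, so $\phi$ is $\{0,1\}$-valued, and $\phi(p,q)=0$ says $q$ splits $p$ nontrivially. Atomlessness then gives two simultaneous properties for any finite family: decomposing into the atoms $p_\sigma = \prod_{i<n}\bigl[(1-\sigma(i))p_i + \sigma(i)(1-p_i)\bigr]$ of the finite Boolean algebra generated by the $p_i$, one can (a) split every nonzero atom and sum the pieces to get a single $q$ splitting all the $p_i$, and (b) pick a nonzero atom $p_{\sigma'}$, which no $q_i$ splits. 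The asymmetric directed relation is then defined on $X^2$ by $\psi((x,y),(x',y')) \deq \max\{\phi(x,y'),\, 1-\phi(x',y)\}$: asymmetry is immediate from the $\{0,1\}$-valuedness, and directedness for $((p_i,q_i))_{i<n}$ follows by taking $(p,q)$ with $q$ as in (a) applied to the $p_i$ and $p$ as in (b) applied to the $q_i$. Note that this trick of passing to tuples is available because \cref{thm:main-result-general} allows $\bar x, \bar y$ to be tuples --- a degree of freedom your proposal did not exploit.
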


\begin{proof}
    Let $X(\cdot) \deq \cP(\cdot) \setminus \{0, 1\}$.
    We note that $X$ is a definable set relative to the theory of unital $\cstar$-algebras (cf. \cite[Example 8.5]{Bradd:2023}).

    Define the formulas $\phi(x, y)$ and $\psi((x, y), (x', y'))$ by
    \begin{align}
        \phi(x, y)
         & \deq \max\{1 - \norm{x y}, 1 - \norm{x (1 - y)}\}, \\
        \psi((x, y), (x', y'))
         & \deq \max\{\phi(x, y'), 1 - \phi(x', y)\}.
    \end{align}
    By the commutativity of projections, both $pq$ and $p(1 - q)$ are projections for all $p, q \in X(A)$.
    Hence, $\phi$ and $\psi$ take only the discrete values $\{0, 1\}$ on $X$.
    Moreover, it is easy to check that $\psi$ is asymmetric on $X^2$.

    To show that $\psi$ satisfies condition (3) of \cref{thm:main-result-general},
    it suffices to prove the following claim:
    \begin{claim*}
        For every $n < \omega$ and $p_i \in X(A)$ $(i < n)$, there exist $q, r \in X(A)$ such that
        \[
            \phi^A(p_i, q) = 0
            \quad\text{and}\quad
            \phi^A(r, p_i) = 1
        \]
        for all $i < n$.
    \end{claim*}
    We now prove this claim.
    Let $n < \omega$ and let $p_i \in X(A)$ for $i < n$.

    For each $\sigma \in 2^n$, we define $p_\sigma \in X(A)$ by
    \[
        p_\sigma \deq \prod_{i < n} \left[(1 - \sigma(i)) p_i + \sigma(i) (1 - p_i)\right].
    \]
    Since $A$ has no minimal projections,
    for each $\sigma \in 2^n$ with $p_\sigma \neq 0$,
    we can choose $q_\sigma \in X(A)$ such that $0 \lneq q_\sigma \lneq p_\sigma$,
    and define $q \deq \sum_{\sigma} q_\sigma$.
    Since the sum of all $p_\sigma$ is equal to the unit $1$,
    there exists $\sigma \in 2^n$ such that $p_\sigma \neq 0$,
    and thus $q \neq 0$.
    Moreover, since the $p_\sigma$ are mutually orthogonal,
    the $q_\sigma$ are also mutually orthogonal,
    and hence $q \in X(A)$.
    For each $i < n$,
    we have $p_i q = \sum_{\sigma : \sigma(i) = 0} q_\sigma \neq 0$ and $p_i (1 - q) = \sum_{\sigma : \sigma(i) = 0} (p_\sigma - q_\sigma) \neq 0$.
    Therefore, $\phi^A(p_i, q) = 0$ for all $i < n$.

    Next, define $r \deq p_\sigma \in X(A)$ for some $\sigma \in 2^n$ with $p_\sigma \neq 0$.
    For each $i < n$, if $\sigma(i) = 0$, then $r = p_\sigma \le p_i$, and hence $r (1 - p_i) = 0$.
    Similarly, if $\sigma(i) = 1$, then $r p_i = 0$.
    Therefore, $\phi^A(r, p_i) = 1$ for all $i < n$.
\end{proof}

The following is our main result.

\begin{theorem} \label{thm:main-result}
    Suppose that $\rT$ satisfies the conditions in \cref{lem:example-of-main-result}.
    Then, under the failure of the \CH,
    there exist unital $\cstar$-algebras $A, B \models \rT$ with density characters at most $\conti$
    such that $A^\cU \not\cong B^\cV$ for all $\cU, \cV \in \beta \omega \setminus \omega$.
\end{theorem}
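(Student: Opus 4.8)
The plan is to obtain \cref{thm:main-result} as an essentially immediate consequence of \cref{lem:example-of-main-result} together with \cref{thm:main-result-general}, the only real content being the passage from abstract $\cL_{\cstar}$-structures back to genuine $\cstar$-algebras. First I would invoke \cref{lem:example-of-main-result}: because $\rT$ is a theory of unital $\cstar$-algebras in which all projections commute and no nonzero projection is minimal, the lemma produces the definable set $X(\cdot) = \cP(\cdot) \setminus \{0,1\}$ together with a formula $\psi$ on $X \times X$ that takes only the values $\{0,1\}$, defines an asymmetric relation, and satisfies the $n$-ary realizability condition. These are precisely the three hypotheses demanded by \cref{thm:main-result-general}.

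Next I would apply \cref{thm:main-result-general} verbatim to this pair $(X, \psi)$. Under the failure of the \CH it yields models $A, B \models \rT$ with density characters at most $\conti$ (concretely $A = M_{\omega_1}$ and $B = M_{\omega_2}$ from the saturated elementary chain built in that proof) for which $A^\cU \not\cong B^\cV$ for all $\cU, \cV \in \beta\omega \setminus \omega$. The two ultrapowers are separated by the cardinal invariant $\cf\bigl(X(\cdot^\cU), \trianglelefteq\bigr)$, which the Claim there evaluates to $\omega_1$ for $A$ and to $\omega_2$ for $B$, so that $\cf(X(A^\cU), \trianglelefteq) = \omega_1 \neq \omega_2 = \cf(X(B^\cV), \trianglelefteq)$.

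It then remains to translate between $\cL_{\cstar}$-structures and operator algebras. Since $\rT$ extends $\rT_{\cstar}$, every model of $\rT$ is the dissection $\rD(A)$ of a unital $\cstar$-algebra $A$ by the axiomatization in \cref{ex:theory-of-cstar}; moreover $\chi(\rD(A)) = \chi(A)$, since the balls $S_n^{\rD(A)}$ exhaust $A$ and density character is invariant under rescaling, so the underlying algebras inherit density character at most $\conti$. A $\cstar$-isomorphism restricts to each ball and preserves every symbol of $\cL_{\cstar}$, while conversely a dissection isomorphism assembles into a $\cstar$-isomorphism, so $A^\cU \cong_{\cstar} B^\cV$ holds iff $\rD(A^\cU) \cong \rD(B^\cV)$. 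Using the identification $\rD(A^\cU) \cong \rD(A)^\cU$ recorded just after the ultraproduct construction, an isomorphism of the operator-algebra ultrapowers would give an isomorphism of the $\cL_{\cstar}$-structures $\rD(A)^\cU$ and $\rD(B)^\cV$, contradicting the non-isomorphism supplied by \cref{thm:main-result-general}.

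The step I expect to require the most care is the preservation of the distinguishing invariant: any isomorphism $A^\cU \to B^\cV$ of $\cstar$-algebras must restrict to an order-isomorphism of $(X(A^\cU), \trianglelefteq)$ onto $(X(B^\cV), \trianglelefteq)$, and hence respect cofinality. This is the conceptual heart of the non-isomorphism, and it holds because $X$ is a definable set and $\psi$ (so also $\trianglelefteq$) is given by a formula, whence both are preserved setwise by any isomorphism of $\cL_{\cstar}$-structures. Once this is in place the contradiction is immediate, and all the earlier machinery — saturation, the cofinality computation, and the dissection formalism — is precisely what makes this invariance both meaningful and computable.
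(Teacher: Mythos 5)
Your proposal is correct and takes essentially the same route as the paper, whose entire proof of \cref{thm:main-result} is the single observation that it follows immediately from \cref{thm:main-result-general} combined with \cref{lem:example-of-main-result}. Your additional care about the dissection formalism, the identification $\rD(A^\cU) \cong \rD(A)^\cU$, and the preservation of $(X, \trianglelefteq)$ under isomorphisms correctly fills in details the paper leaves implicit.
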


\begin{proof}
    This immediately follows from \cref{thm:main-result-general} and \cref{lem:example-of-main-result}.
\end{proof}

Finally, we conclude this section by giving a concrete example of \cref{thm:main-result-intro}.

\begin{example} \label{ex:cantor-set}
    Let $X$ be the Cantor space $2^\omega$.
    The $\cstar$-algebra $A = C(X)$ is an abelian unital $\cstar$-algebra with no minimal projections.
    Thus, the theory of $A$ satisfies the conditions in \cref{lem:example-of-main-result}.
    More generally, for any abelian unital $\cstar$-algebra $A = C(X)$, where $X$ is a compact Hausdorff space,
    $A$ has no minimal projections if and only if the Boolean algebra $\clop(X)$ consisting of all clopen subsets of $X$ is atomless.

    A nonabelian example is obtained by tensoring a unital nonabelian $\cstar$-algebra with no nontrivial projections such as the Jiang--Su algebra $\mathcal Z$.
    Namely, for $X$ as above, the theory of $\mathcal Z \otimes C(X)$ satisfies the conditions in \cref{lem:example-of-main-result}.
    %    Moreover, instead of tensoring, one can also construct a nonabelian example by taking a reduced product.
\end{example}

\section{Applications to General Topology}

In this section, we discuss applications of \cref{thm:main-result-intro} to general topology in the commutative case.

\begin{definition}
    Suppose that $X$ is a topological space.
    We say that $X$ is \emph{totally disconnected} if all connected components of $X$ are singleton sets.
    We say that $X$ is \emph{$0$-dimensional} if $\clop(X)$ forms a basis for $X$.
    When $X$ is a compact Hausdorff space, the conditions of being totally disconnected and being $0$-dimensional are equivalent.
    A compact Hausdorff space satisfying these conditions is called a \emph{Stone space}.
\end{definition}

We note that Gelfand duality implies that,
for compact Hausdorff spaces $X$ and $Y$,
$X$ and $Y$ are homeomorphic if and only if $C(X) \cong C(Y)$ as $\cstar$-algebras.
Similarly, Stone duality (see \cite[\S 1.3.1]{Farah:2019}) implies that, for Stone spaces $X$ and $Y$,
$X$ and $Y$ are homeomorphic if and only if $\clop(X) \cong \clop(Y)$ as Boolean algebras.
Moreover, a compact Hausdorff space $X$ is a Stone space if and only if $C(X)$ is an abelian $\cstar$-algebra of real rank zero.

\begin{lemma}[{\cite[Lemma 5.7]{EAGLE:2015}}] \label{lem:ultrapower-of-abel-cstar}
    Let $X$ be a compact Hausdorﬀ space, and $\cU$ be an ultrafilter.
    Then $C(X)^\cU \cong C(\sum_\cU X)$, and $\clop(X)^\cU \cong \clop( \sum_\cU X)$
    where $\sum_\cU X$ is the ultracopower of $X$ and $\clop(X)^\cU$ is the classical model-theoretic ultrapower of Boolean algebra $\clop(X)$.
\end{lemma}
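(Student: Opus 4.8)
The plan is to prove both isomorphisms through Gelfand and Stone duality, reducing each to a computation with Stone--\v{C}ech compactifications. Write $I$ for the index set on which $\cU$ lives. For the first isomorphism I would begin by identifying the bounded sequence algebra: since $X$ is compact and $I$ carries the discrete topology, the disjoint union $\coprod_{i\in I} X \cong I\times X$ gives $\ell^\infty(I, C(X)) = C_b(I\times X) \cong C(\beta(I\times X))$, with the sup-norms matching. Under this identification the metric ultrapower is
\[
    C(X)^\cU = \ell^\infty(I, C(X)) \big/ J, \qquad J \deq \set[:]{(a_i)_i}{\lim\nolimits_{i\to\cU}\norm{a_i} = 0},
\]
and $J$ is a closed ideal of the commutative unital $\cstar$-algebra $C(\beta(I\times X))$. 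By the standard ideal theory of commutative $\cstar$-algebras, $J = \set[:]{f}{f|_K = 0}$ for the closed set $K \deq \bigcap_{f\in J} f^{-1}(0)$, and restriction yields $C(X)^\cU \cong C(K)$.

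The next step is to identify $K$ with the ultracopower. Let $\pi \colon \beta(I\times X)\to\beta I$ be the continuous extension of the first projection $I\times X\to I$; by definition $\sum_\cU X = \pi^{-1}(\cU)$. I would prove $K = \pi^{-1}(\cU)$ by two inclusions. For $\pi^{-1}(\cU)\subseteq K$: if $\pi(z) = \cU$ and $f=(a_i)_i \in J$, then for each $\ep>0$ the set $A_\ep \deq \set[:]{i\in I}{\norm{a_i}\le\ep}$ lies in $\cU$, so $z$ lies in $\overline{A_\ep\times X} = \pi^{-1}(\overline{A_\ep}^{\,\beta I})$; since $\abs{f}\le\ep$ on $A_\ep\times X$, continuity gives $\abs{f(z)}\le\ep$, whence $f(z)=0$. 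For $K\subseteq\pi^{-1}(\cU)$: if $\pi(z) = \cV \neq \cU$, pick $A\in\cV\setminus\cU$; then $f \deq (\mathbf 1_A(i)\,1)_i$ lies in $J$ but $f=\mathbf 1_A\circ\pi$ extends to take value $1$ at $z$, so $z\notin K$. Here I rely on $\overline{A\times X}^{\,\beta(I\times X)} = \pi^{-1}(\overline A^{\,\beta I})$ and $\overline A^{\,\beta I} = \set[:]{\cW\in\beta I}{A\in\cW}$. This establishes $C(X)^\cU \cong C(\sum_\cU X)$.

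For the second isomorphism I would pass to projections. By \cref{ex:definable-sets-in-cstar} the set of projections $\cP$ is definable relative to $\rT_\cstar$, so \cref{prop:definable-set-ultraproduct} gives $\cP(C(X)^\cU) = \prod_{i\to\cU}\cP(C(X))$. Now $\cP(C(X))$ is exactly the set of indicators of clopen sets, so $\cP(C(X)) \cong \clop(X)$ as Boolean algebras; and for two projections in the abelian algebra $C(X)$ one has $\norm{p-q}\in\{0,1\}$, with $\norm{p-q}=0$ iff $p=q$. Hence the metric ultraproduct relation $\lim_{i\to\cU}\norm{p_i-q_i}=0$ collapses to $\set[:]{i}{p_i=q_i}\in\cU$, so $\prod_{i\to\cU}\cP(C(X))$ carries exactly the underlying set of the classical Boolean ultrapower $\clop(X)^\cU$. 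Since the operations $p\wedge q = pq$, $p\vee q = p+q-pq$ and $\lnot p = 1-p$ are computed coordinatewise in the ultraproduct, this bijection is a Boolean-algebra isomorphism. Combining with the first part, $\clop(X)^\cU \cong \cP(C(X)^\cU) \cong \cP(C(\sum_\cU X)) = \clop(\sum_\cU X)$.

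The main obstacle I anticipate is the topological identification $K = \pi^{-1}(\cU) = \sum_\cU X$: it requires the precise Bankston definition of the ultracopower together with the two closure facts above, which must be handled carefully to make both inclusions rigorous. Everything else --- the quotient description of the closed ideal $J$, the definability of $\cP$, and the coordinatewise nature of the Boolean operations --- is routine once this matching is in place.
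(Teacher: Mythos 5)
Your proposal is correct, but note that the paper does not actually prove this lemma: it is quoted verbatim from Eagle's work (cited as Lemma 5.7 there), which in turn rests on Bankston's duality between ultracoproducts of compact Hausdorff spaces and $\cstar$-ultraproducts of their function algebras. What you have written is essentially a self-contained reconstruction of that standard argument, and it goes through. The key identification $K = \pi^{-1}(\cU)$ is handled correctly: your closure fact follows since $\mathbf 1_{A\times X} = \mathbf 1_A \circ \pi$ extends uniquely to the $\{0,1\}$-valued function $h \circ \pi$ on $\beta(I\times X)$, where $h$ is the extension of $\mathbf 1_A$ to $\beta I$, and $h^{-1}(1) = \overline A^{\,\beta I} = \set[:]{\cW \in \beta I}{A \in \cW}$; both inclusions then work exactly as you state. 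Two points worth making explicit: first, $J$ is indeed a closed ideal (it is the kernel of the continuous submultiplicative seminorm $\lim_{i\to\cU}\norm{\cdot}$ on $\ell^\infty$), which is what licenses the hull--kernel identity $J = \set[:]{f}{f|_K = 0}$ and, via Tietze, the surjectivity of restriction onto $C(K)$; second, in the Boolean-algebra step, the appeal to \cref{ex:definable-sets-in-cstar} and \cref{prop:definable-set-ultraproduct} is doing real work --- it is precisely the projection-lifting statement $\cP(C(X)^\cU) = \prod_{i\to\cU}\cP(C(X))$ that one would otherwise have to prove by a functional-calculus perturbation argument --- and the collapse of the metric ultraproduct to the classical set-theoretic ultrapower is justified because $p - q$ is a self-adjoint element of an abelian algebra taking values in $\{-1,0,1\}$, so $\norm{p-q}\in\{0,1\}$ with $\norm{p-q}=0$ iff $p=q$. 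Your argument also has the mild virtue of working uniformly for arbitrary index sets and even principal ultrafilters (where $\pi^{-1}(i{\uparrow}) = \{i\}\times X$ recovers $C(X)^{i\uparrow} \cong C(X)$), which matches the generality of the statement as quoted.
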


For a topological space $X$, the \emph{weight} of $X$, denoted by $w(X)$, is the least cardinality of a basis for $X$.

By applying \cref{thm:main-result} to $C(2^\omega)$, we obtain the following result:
\begin{corollary} \label{cor:top-application}
    Under the failure of the \CH,
    there exist Stone spaces $X$ and $Y$ satisfying the following properties:
    \begin{enumerate}
        \item $w(X), w(Y) \le \conti$.
        \item There exist a set $I$ and non-principal ultrafilters $\cU, \cV \in \beta I\setminus I$ such that $\sum_\cU X$ and $\sum_\cV Y$ are homeomorphic.
        \item For any $\cU, \cV \in \beta \omega \setminus \omega$, $\sum_\cU X$ and $\sum_\cV Y$ are not homeomorphic.
    \end{enumerate}
\end{corollary}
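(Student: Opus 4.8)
The plan is to specialize \cref{thm:main-result} to the commutative theory $\rT \deq \Th(C(2^\omega))$ and then to transport the resulting $\cstar$-algebraic dichotomy into topology by means of Gelfand and Stone duality together with \cref{lem:ultrapower-of-abel-cstar}. By \cref{ex:cantor-set}, the theory $\rT$ satisfies the hypotheses of \cref{lem:example-of-main-result}, so \cref{thm:main-result} furnishes unital $\cstar$-algebras $A, B \models \rT$ with $\chi(A), \chi(B) \le \conti$ such that $A^\cU \not\cong B^\cV$ for all $\cU, \cV \in \beta\omega \setminus \omega$. Since $A \equiv B \equiv C(2^\omega)$ and both commutativity (the sentence $\sup_{x, y} \norm{xy - yx} = 0$) and real rank zero are axiomatizable, $A$ and $B$ are abelian $\cstar$-algebras of real rank zero. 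Hence, by Gelfand duality and the characterization of Stone spaces recalled above, $A \cong C(X)$ and $B \cong C(Y)$ for Stone spaces $X, Y$. Moreover $A$ and $B$ are infinite-dimensional, since a finite-dimensional abelian $\cstar$-algebra would possess minimal projections; therefore $X$ and $Y$ are infinite.

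Next I would verify the three clauses. For (1), \cref{lem:dc-of-abel-cstar} applied to the infinite spaces $X, Y$ gives $w(X) = \chi(C(X)) = \chi(A) \le \conti$ and likewise $w(Y) \le \conti$. For (2), I would apply the Keisler--Shelah theorem (\cref{thm:Keisler-Shelah}) to the elementarily equivalent structures $A \equiv B$: there is an index set $I$ and $\cU, \cV \in \beta I \setminus I$ with $A^\cU \cong B^\cV$. By \cref{lem:ultrapower-of-abel-cstar} one has $A^\cU \cong C(X)^\cU \cong C(\sum_\cU X)$ and $B^\cV \cong C(\sum_\cV Y)$, both commutative unital $\cstar$-algebras and hence of the form $C(Z)$ for a compact Hausdorff $Z$; thus $C(\sum_\cU X) \cong C(\sum_\cV Y)$, and Gelfand duality forces $\sum_\cU X$ and $\sum_\cV Y$ to be homeomorphic. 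For (3), the same two isomorphisms of \cref{lem:ultrapower-of-abel-cstar} convert $A^\cU \not\cong B^\cV$ (for $\cU, \cV \in \beta\omega \setminus \omega$) into $C(\sum_\cU X) \not\cong C(\sum_\cV Y)$, whence $\sum_\cU X$ and $\sum_\cV Y$ are not homeomorphic, again by Gelfand duality.

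The only genuinely delicate point I anticipate is the claim that the abstract models $A, B$ produced by \cref{thm:main-result} really have the form $C(X), C(Y)$ for \emph{Stone} spaces, i.e.\ that passing to a model elementarily equivalent to $C(2^\omega)$ preserves both commutativity and real rank zero. Commutativity is immediate from its single axiom, while for real rank zero one must invoke its continuous axiomatizability via the density of self-adjoint elements with finite spectrum. Everything else is bookkeeping: the dualities are contravariant equivalences, so isomorphism and non-isomorphism of the algebras correspond exactly to homeomorphism and non-homeomorphism of their spectra, and the weight/density-character identification is precisely \cref{lem:dc-of-abel-cstar}.
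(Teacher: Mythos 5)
Your proposal is correct and takes essentially the same route as the paper's own proof: apply \cref{thm:main-result} to $\Th(C(2^\omega))$, use the axiomatizability of commutativity and real rank zero to write $A \cong C(X)$ and $B \cong C(Y)$ for Stone spaces, get clause (2) from the Keisler--Shelah theorem, and transport (non-)isomorphism to (non-)homeomorphism via \cref{lem:ultrapower-of-abel-cstar}, Gelfand duality, and \cref{lem:dc-of-abel-cstar}. Your explicit verification that $A$ and $B$ are infinite-dimensional, so that \cref{lem:dc-of-abel-cstar} applies to infinite spaces $X$ and $Y$, is a detail the paper leaves implicit but does not alter the argument.
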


\begin{proof}
    Suppose that the \CH fails.
    By \cref{thm:main-result} with $M = C(2^\omega)$,
    there exist $A, B \models \Th(M)$ such that $\chi(A), \chi(B) \le \conti$ and $A^\cU \not\cong B^\cV$ for all $\cU, \cV \in \beta \omega \setminus \omega$.
    By the \KS theorem, there exist a set $I$ and non-principal ultrafilters $\cU, \cV \in \beta I \setminus I$ such that $A^\cU \cong B^\cV$.

    Note that $M$ is a unital abelian $\cstar$-algebra of real rank zero.
    Since the class of real rank zero $\cstar$-algebras is elementary (see \cite[Example 2.4.2]{MToCA}),
    $A$ and $B$ are also unital abelian $\cstar$-algebras of real rank zero.
    Thus, there exist Stone spaces $X$ and $Y$ such that $A \cong C(X)$ and $B \cong C(Y)$.

    Since, for every infinite Hausdorff space $Z$, the density character of $C(Z)$ coincides with $w(Z)$ (see \cite[Lemma 1.2]{FM:2021}),
    we have $w(X), w(Y) \le \conti$; that is, condition (1) holds.
    Moreover, conditions (2) and (3) also hold for $X$ and $Y$, by the properties of $A$ and $B$ and by \cref{lem:ultrapower-of-abel-cstar}.
\end{proof}

\section{Conclusion}

In conclusion, we discuss generalizations of
\cref{thm:main-result-general} and \cref{thm:main-result}.
We note that if a theory $\rT$ is \emph{stable}
(in the sense of continuous model theory; cf. \cite[Definition 5.2]{FHS2:2014}),
then all ultrapowers of models of $\rT$ with density character at most $\conti$ are isomorphic without assuming the \CH,
since they are $\conti$-saturated (cf. \cite[Theorem 5.6 (1)]{FHS2:2014}).
Thus, we are led to consider the following question:

\begin{question}
    Assume the \CH fails.
    Let $\rT$ be an unstable $\cL$-theory.
    Do there exist $A, B \models \rT$ with $\chi(A), \chi(B) \le \conti$ such that
    $A^\cU \not\cong B^\cV$ for all $\cU, \cV \in \beta \omega \setminus \omega$?
\end{question}

We note that the assumptions of \cref{thm:main-result-general} immediately imply that the theory $\rT$ has the \emph{order property} (cf. \cite[Definition 5.2]{FHS2:2014}),
which is equivalent to $\rT$ being unstable (see \cite[Theorem 5.5]{FHS2:2014}).
It is known that for any unital infinite-dimensional $\cstar$-algebra $A$, the theory $\Th(A)$ has the order property (see \cite[Lemma 5.3]{FHS1:2013}).
Therefore, as a special case of the above question, the following question is also of interest.

\begin{question}
    Assume the \CH fails.
    Let $\rT$ be the theory of a unital infinite-dimensional $\cstar$-algebra.
    Do there exist $A, B \models \rT$ with $\chi(A), \chi(B) \le \conti$ such that
    $A^\cU \not\cong B^\cV$ for all $\cU, \cV \in \beta \omega \setminus \omega$?
\end{question}

In the construction given in the proof of \cref{thm:main-result}, the density characters of $A$ and $B$ are controlled only up to $\le \aleph_1$ and $\le \aleph_2$, respectively.
Therefore, the case where $A$ and $B$ are separable is still unsolved.

\begin{question} \label{ques:sep-case}
    Assume that the \CH fails.
    Does there exist a theory $\rT$ of $\cstar$-algebras
    with separable models $A, B \models \rT$ such that
    $A^\cU \not\cong B^\cV$ for all $\cU, \cV \in \beta \omega \setminus \omega$?
\end{question}

In the classical setting of \cref{ques:sep-case}, Shelah \cite{Shelah:1992} gave a result for the theory of graphs.
By the discussion of \cref{cor:top-application},
the abelian $\cstar$-algebra case of \cref{ques:sep-case} can be reformulated as the following question:

\begin{question} \label{ques:top-case}
    Assume the \CH fails.
    Do there exist Stone spaces $X$ and $Y$ satisfying the following properties?
    \begin{enumerate}
        \item $w(X), w(Y) \le \aleph_0$, that is, $X$ and $Y$ are second countable.
        \item There exist a set $I$ and non-principal ultrafilters $\cU, \cV \in \beta I\setminus I$ such that $\sum_\cU X$ and $\sum_\cV Y$ are homeomorphic.
        \item For any $\cU, \cV \in \beta \omega \setminus \omega$, $\sum_\cU X$ and $\sum_\cV Y$ are not homeomorphic.
    \end{enumerate}
\end{question}

\begin{remark}
    By the Riesz-Markov-Kakutani theorem, for a compact Hausdorff space $X$, $C(X)$ is separable if and only if $X$ is metrizable.
    Hence, in \cref{ques:top-case} the requirement that $X$ and $Y$ are second countable can be replaced with the requirement that $X$ and $Y$ are metrizable.
\end{remark}

\bibliographystyle{plain}
\bibliography{references}

\end{document}